\thanks{\copyright 202X This work is licensed under a CC BY 4.0 license.}
\begin{document} \setcounter{page}{1}
\title[\hfilneg EJDE-202X/conf/XX\hfil  RUNNING HEADER]
{On Existence of Traveling Wave of an HBV Infection Dynamics Model: A novel approach}

\author[Rupchand Sutradhar, D C Dalal \hfil EJDE-2022/conf/26\hfilneg]
{Rupchand Sutradhar, D C Dalal}

\address{Rupchand Sutradhar \newline
Indian Institute of Technology Guwahati,Guwahati, Assam, 781039, India}
\email{rsutradhar@iitg.ac.in}

\address{D C Dalal \newline
Indian Institute of Technology Guwahati,Guwahati, Assam, 781039, India}
\email{durga@iitg.ac.in}

\thanks{Published Month Day, 202X}
\subjclass[2010]{35C07,37N25}
\keywords{Hepatitis B; Mathematical model;$Ger\hat{s}gorin$ disc ; Numerical simulation; Diffusion;}

\begin{abstract}
 In this work,  a hepatitis B virus infection dynamics model is proposed including the spatial dependence of viruses. The existence of traveling waves for the proposed model is established through the application of the celebrated $Ger\hat{s}gorin$  theorem. The procedure followed to establish the existence of a traveling wave solution is  innovative and probably the first attempt of this particular approach.  The elasticity of  basic reproduction number with respect to some model parameters are also shown.  Furthermore,  the effects of spatial diffusivity of the viruses on infection are studied, and  it is noticed that  due to the diffusion, viruses spread rapidly throughout the liver.
\end{abstract}

\maketitle
\numberwithin{equation}{section}
\newtheorem{theorem}{Theorem}[section]
\newtheorem{proposition}[theorem]{Proposition}
\newtheorem{remark}[theorem]{Remark}
\newtheorem{definition}[theorem]{Definition}
\newtheorem{lemma}[theorem]{Lemma}
\allowdisplaybreaks

\section{Introduction} 
The hepatitis B virus (HBV) is a prominent causative agent in HBV infections worldwide. Chronic HBV infection leads to the deadly liver diseases like  cirrhosis, primary hepatocellular carcinoma (HCC), etc. Despite the availability of a highly effective vaccine for this virus, a significant number of  the population continues to grapple with the burden of this viral infection. Mainly two kinds of antiviral drugs are used to treat HBV infection: (i) viral replication inhibitors (lamivudine, adefovir, entecavir, telbivudine, and tenofovir), and (ii) immune system modulators (interferons (IFN)-alpha-2a, pegylated (PEG)-IFNalpha-2a).

In the literature, in the year 1996, Nowak et al. \cite{1996_Nowak} first proposed a HBV infection dynamics model which is commonly known as basic model. This basic model comprises three compartments, including susceptible host cells, infected cells, and free virus particles. Following the pioneering work of Nowak et al., extensive research has been carried out modifying the basic model or formulating a new model based on the biological findings available in the literature \cite{2006_Murray,2007_Ciupe,2008_Min,2015_Manna,2018_Danane_mathematical,2018_fatehi_nkcell,2021_hews_global,2023_Sutradhar_fractional}. Despite the advancements in theoretical studies and remarkable progress in medical science, HBV infection continues to pose a significant threat as  potentially fatal liver diseases. The main reasons could be the complex life cycle and replication process of the virus that is really difficult to understand. Recycling of capsids is one of the key  intracellular steps in the viral life cycle and acts a positive feedback loop \cite{2016_jun_nakabayashi}. Recently,  incorporating the recycling of capsids, Sutradhar and Dalal \cite{2023_sutradhar_recycling} proposed an improved  mathematical model on HBV infection which is  given by a system of equations \eqref{ODE_model} with slight symbolic modifications.
\begin{equation} \label{ODE_model}
	\left.
\begin{split}
	&\frac{dT}{dt}	= \lambda-\mu T-kVT,\\
	&\frac{dI}{dt}	=kVT-\delta I,\\
	&\frac{dD}{dt}=aI+\gamma(1-\alpha)D-\alpha\beta D-\delta D,\\
	&\frac{dV}{dt}=\alpha\beta D-\delta_v V,	
\end{split} 
\right\}
\end{equation}
where the biological interpretations of each model parameter and model variable are described in Table \ref{Table: Parameter describtion}.
The authors of the above model \eqref{ODE_model} assumed that cells and viruses are homogeneously distributed throughout the liver, and ignored the mobility of cells, capsids and viruses. Biological motion, characterized by the movement of living organisms, plays a pivotal role in shaping and influencing a diverse array of biological phenomena. In this study, including the random spatial mobility of viruses followed by the Fickian diffusion, we extend the model\eqref{ODE_model} and the modified reaction diffusion equation is given by the following dynamical system:
\begin{equation} \label{Eqn:PDE}
	\left.
	\begin{split}
		&\frac{\partial T}{\partial t}	= \lambda-\mu T-kVT,\\
		&\frac{ \partial I}{\partial t}	=kVT-\delta I,\\
		&\frac{\partial D}{\partial t}=aI-R_sD,\\
		&\frac{\partial V}{ \partial t}={d}_v\dfrac{\partial^2 V}{\partial x^2}+\alpha\beta D-\delta_v V,	
	\end{split}
\right\}
\end{equation} 
where $d_v$ denotes the diffusion coefficient and consider  $R_s=\alpha\beta-\gamma(1-\alpha)+\delta$. The susceptible host cells and infected cells can't move whereas HBV DNA-containing capsids can, but their movement is restricted within the single infected cells. Hence, the movements of the capsids are ignored for this time being.  Only spatial mobility of the free viruses are considered here.  The proposed model \eqref{Eqn:PDE} is visually depicted in Figure \ref*{diagrammatic representation}.
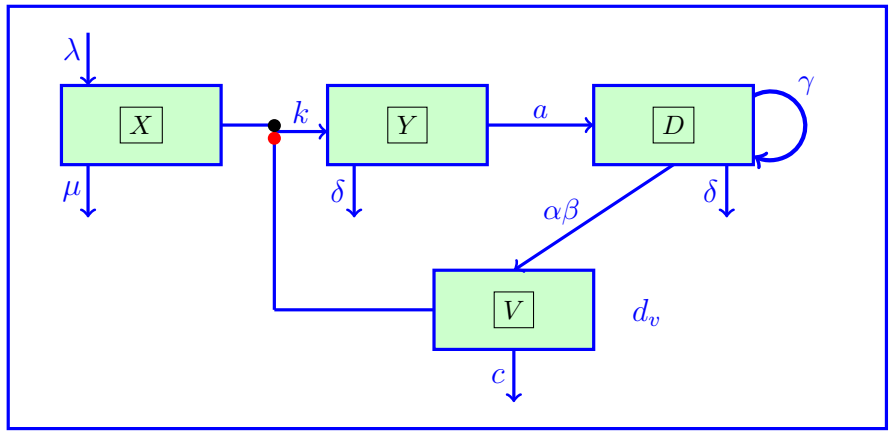
\begin{figure}[h]
	\begin{center}
	\begin{tikzpicture}[scale=0.7]
		\draw[blue, very thick] (0,-4) rectangle (16.5,4); 	 
		\draw[blue, very thick,fill = green!20!white] (1,1) rectangle (4,2.5);	 
		\draw[blue, very thick,fill = green!20!white] (6,1) rectangle (9,2.5);	 
		\draw[blue, very thick,fill = green!20!white] (11,1) rectangle (14,2.5);   
		\draw[blue, very thick,fill = green!20!white] (8,-1) rectangle (11,-2.5);	 
		\draw[blue, very thick] (4,1.75)--(5,1.75);		     
		\draw[->,blue, very thick] (5,1.627)--(6,1.627);		     
		\draw[->,blue, very thick] (9,1.75)--(11,1.75);		 
		\draw[color=blue] (10,2.0) node {\large $a$};		 
		\draw[blue, ultra thick, ->] (14,2.3) arc (-240:-478:0.65);     
		\draw[color=blue] (15,2.5) node {\large $\gamma$};		  	
		\draw[->,blue, very thick] (12.5,1)--(9.5,-1);					
		\draw[color=blue] (10.4,0.1) node {\large $\alpha\beta$};		        
		\node[draw] at (2.5,1.75) {$X$};									
		\node[draw] at (7.5,1.75) {$Y$};									
		\node[draw] at (12.5,1.75) {$D$};									
		\node[draw] at (9.5,-1.75) {$V$};									
		\draw[->,blue, very thick] (1.5,3.5)--(1.5,2.5);        		
		\draw[color=blue] (1.2,3.2) node {\Large $\lambda$};				
		\draw[->,blue, very thick] (1.5,1)--(1.5,0);					
		\draw[color=blue] (1.2,0.5) node {\Large $\mu$};				
		\draw[->,blue, very thick] (6.5,1)--(6.5,0);		        	
		\draw[color=blue] (6.2,0.5) node {\Large $\delta$};				
		\draw[->,blue, very thick] (13.5,1)--(13.5,0);		        	
		\draw[color=blue] (13.2,0.5) node {\Large $\delta$};			
		\draw[->,blue, very thick] (9.5,-2.5)--(9.5,-3.5);		        
		\draw[color=blue] (9.2,-3) node {\Large $c$};						
		\draw[color=blue] (12,-1.8) node {\Large $d_v$};	
		\draw[blue, very thick] (8,-1.75)--(5,-1.75);
		\draw[blue, very thick] (5,-1.75)--(5,1.5);	
		\fill [color=red] (5,1.5) circle (3.5pt);
		\fill [color=black] (5,1.74) circle (3.5pt);
		\draw[color=blue] (5.5,2) node {\Large $k$};					
	\end{tikzpicture}
\end{center}
\caption{The diagrammatic representation of the system \eqref{Eqn:PDE}.}
\label{diagrammatic representation}
\end{figure}

It is noteworthy that the evidences  indicate wound healing and the dissemination of solid tumors can propagate in a manner reminiscent of a traveling wave front \cite{2004_maini_travelling,2004_matzavinos_travelling}. 
Many authors have also investigated HBV infection in the context of  traveling wave and interpreted their results from various angles \cite{2010_gan_travelling,2016_duan_dynamics,2021_issa_diffusion}. All of them have  dealt with susceptible hepatocytes, infected hepatocytes and free viruses as model compartments. In this study,  the existence of  traveling wave solution of an HBV infection dynamics model is established considering the following three main factors that were not accounted in the previous studies:  
\begin{enumerate}[(i)]
	\item  HBV capsids as a separate compartment.
	\item  The recycling effects of capsids. 
	\item The diffusion of viruses.
\end{enumerate}
For the sake of mathematical simplicity, we restrict our analysis to one-dimensional space. The model \eqref{Eqn:PDE} is streamlined  by implementing the following appropriate transformations:
$$T_1=\dfrac{\mu}{\lambda}T,~I_1=\dfrac{\mu}{\lambda}I,~ D_1=\frac{\mu}{\lambda}D,~V_1=\dfrac{k}{\mu}V,~t_1=\mu t,~x'=x,~\rho_1=\frac{\delta}{\mu},~\rho_2=\frac{a}{\mu},~\rho_3=\frac{R_s}{\mu},$$
$$\rho_4=\frac{k\alpha\beta\lambda}{\mu^3},~\rho_5=\frac{\delta_v}{\mu},~\mathcal{D}_v=\mu d_v.$$
Here, all parameters $\rho_1,~\rho_2,~\rho_3,~\rho_4,~\rho_5,~\mathcal{D}_v$ are positive constants.
As a result, the following system of equations is obtained (omitting the primes on t and x for simplicity) as:
\begin{equation} \label{Eqn: PDE-simple form rho-1 to rho-5}
	\left.
	\begin{split}
		&\frac{\partial T_1}{\partial t}	= 1-T_1-V_1T_1,\\
		&\frac{ \partial I_1}{\partial t}	=V_1T_1-\rho_1 I_1,\\
		&\frac{\partial D_1}{\partial t}=\rho_2I_1-\rho_3D_1,\\
		&\frac{\partial V_1}{ \partial t}=\mathcal{D}_v\dfrac{\partial^2 V_1}{\partial x^2}+\rho_4 D_1-\rho_5 V_1.	
	\end{split}
\right\}
\end{equation}
\section{Preliminaries} \label{Preliminaries}
\begin{theorem} \cite{2012_horn_matrix} \label{2012_Horn_Johnson}
	Consider a matrix $A=[a_{ij}]\in M_n$, set of all $n\times n$ matrices. Let
	$$R_i'(A)=\sum_{j\neq i}|a_{ij}|,~i=1,2,...,n.$$
	 denote the deleted absolute row sums of the given matrix $A$.  Consider $n$ $Ger\hat{s}gorin$ discs, defined as 
	 $$\left\{ z \in \mathbb{C}: |z-a_{ii}|\leq R_i'(A)\right\}, i=1,2,...,n.$$
	Then, the  eigenvalues of the matrix $A$ lie in the union of $Ger\hat{s}gorin$ discs
	$$G(A)=\displaystyle \bigcup_{i=1}^{n}\left\{z\in\mathbb{C}: |z-a_{ii}|\leq R'_i(A)\right\}$$
	\\		
	Additionally, if, out of  $n$ discs, $k$ discs form the set $G_k(A)$ that remains disjoint from the remaining $(n-k)$ discs, then $G_k(A)$ contains exactly $k$ eigenvalues
	of $A$, counted according to their algebraic multiplicities.
\end{theorem}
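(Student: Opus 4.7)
The plan is to prove the two parts of Theorem 1 separately: first the inclusion of every eigenvalue in the union $G(A)$, and then the counting statement for a disjoint subcollection of discs. The first part is a direct algebraic argument using the eigenvalue equation, while the second part needs a continuity / homotopy argument.

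For the first part, I would let $\lambda$ be an eigenvalue of $A$ with associated eigenvector $x=(x_1,\dots,x_n)^{\top}\neq 0$, and choose an index $i$ for which $|x_i|=\max_j|x_j|>0$ (such an $i$ exists because $x\neq 0$). Reading off the $i$-th coordinate of $Ax=\lambda x$ gives $(\lambda-a_{ii})x_i=\sum_{j\neq i}a_{ij}x_j$. Dividing by $x_i$, taking absolute values, and using $|x_j|/|x_i|\leq 1$ yields $|\lambda-a_{ii}|\leq\sum_{j\neq i}|a_{ij}|=R_i'(A)$, so $\lambda$ lies in the $i$-th Gershgorin disc, and hence in $G(A)$.

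For the counting statement, I would use the homotopy $A(t)=D+t(A-D)$ for $t\in[0,1]$, where $D=\mathrm{diag}(a_{11},\dots,a_{nn})$. The Gershgorin discs of $A(t)$ have the same centers $a_{ii}$ but radii $tR_i'(A)$, so each disc at parameter $t$ is contained in the corresponding disc at $t=1$. Therefore, if the $k$-disc cluster $G_k(A)$ is disjoint from the remaining $n-k$ discs at $t=1$, the analogous separation holds (with a positive gap) for every $t\in[0,1]$. At $t=0$, $A(0)=D$ has eigenvalues exactly $\{a_{11},\dots,a_{nn}\}$, and each center lies in its own (now degenerate) disc, so the number of eigenvalues inside the $k$-disc cluster at $t=0$ is exactly $k$. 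As $t$ varies, the eigenvalues of $A(t)$ move continuously (being roots of a characteristic polynomial whose coefficients are polynomials in $t$), and by the first part they cannot leave the union of the $n$ discs at level $t$. Since the $k$-disc cluster stays separated from the other $n-k$ discs by a positive gap, no eigenvalue can cross between the two clusters; hence the count of eigenvalues in the cluster is preserved throughout $[0,1]$, giving exactly $k$ eigenvalues in $G_k(A)$ at $t=1$.

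The main obstacle is making the continuity argument rigorous, because eigenvalues are most naturally treated as a multiset and can collide along the homotopy. The cleanest framing invokes continuity of the roots of a monic polynomial in its coefficients with respect to the Hausdorff distance on multisets, combined with the observation that an integer-valued function counting roots inside a fixed open region is locally constant whenever no root crosses the boundary. The disjointness hypothesis and Part 1 together guarantee that no root ever crosses between the two clusters, so the count is globally constant on $[0,1]$, which is precisely what is needed.
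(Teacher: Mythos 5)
Your proof is correct: the maximal-component argument for the inclusion of every eigenvalue in $G(A)$ and the homotopy $A(t)=D+t(A-D)$ with the continuity-of-roots argument for the disjoint-cluster count are exactly the classical proof of Ger\v{s}gorin's theorem. Note that the paper itself does not prove this statement at all --- it is quoted as a preliminary result and attributed to Horn and Johnson \cite{2012_horn_matrix}, whose proof is essentially the one you give --- so there is nothing in the paper to diverge from; your write-up correctly identifies the one delicate point (eigenvalues forming a multiset that varies continuously with the coefficients, with the count in a region locally constant as long as no root touches the boundary), which is precisely how the standard reference handles it.
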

\begin{definition}
	Consider a continuous dynamical system described by $\dot{X}=AX$, where $A$ is an $n\times n$ matrix and $X=(x_1,x_2,...,x_n)^T$ is an $n\times 1$ vector of dependent variables. Suppose that there are two equilibrium points of $\dot{X}=AX$ at $X^*$ and $X^{**}$. Then, a solution $\Phi (t)$ is said to be a heteroclinic orbit starting from $X^*$ to $X^{**}$ if the following conditions are satisfied.
	$$\displaystyle \lim\limits_{t \to -\infty}\Phi (t)=X^*,~\lim\limits_{t \to \infty}\Phi (t)=X^{**}$$
	or, 
		$$\lim\limits_{t \to -\infty}\Phi (t)=X^{**},~\lim\limits_{t \to \infty}\Phi (t)=X^{*}.$$

\end{definition}
\begin{table} \label{Table: Parameter describtion}
	\begin{tabular}{|c|ll|}
		\hline
		\rowcolor[HTML]{FFCCC9} 
		Variables  & \multicolumn{2}{l|}{\cellcolor[HTML]{FFCCC9}Description}                                                      \\ \hline
		\rowcolor[HTML]{96FFFB} 
		$T$          & \multicolumn{2}{l|}{\cellcolor[HTML]{96FFFB}Number of uninfected hepatocytes}                                 \\ \hline
		\rowcolor[HTML]{96FFFB} 
		$I$          & \multicolumn{2}{l|}{\cellcolor[HTML]{96FFFB}Number of infected hepatocytes}                                   \\ \hline
		\rowcolor[HTML]{96FFFB} 
		$D$          & \multicolumn{2}{l|}{\cellcolor[HTML]{96FFFB}HBV capsids}                                                      \\ \hline
		\rowcolor[HTML]{96FFFB} 
		$V$          & \multicolumn{2}{l|}{\cellcolor[HTML]{96FFFB}Viruses}                                                          \\ \hline
		\rowcolor[HTML]{FFCCC9} 
		Parameters & \multicolumn{1}{l|}{\cellcolor[HTML]{FFCCC9}Descriptions}                                            & Values \\ \hline
		\rowcolor[HTML]{96FFFB} 
		$\lambda$  & \multicolumn{1}{l|}{\cellcolor[HTML]{96FFFB}Natural growth rate of uninfected  hepatocytes}          &  $2.6\times10^7$      \\ \hline
		\rowcolor[HTML]{96FFFB} 
		$k$        & \multicolumn{1}{l|}{\cellcolor[HTML]{96FFFB}Virus to cell infection rate}                            &  $1.67\times10^{-12}$      \\ \hline
		\rowcolor[HTML]{96FFFB} 
		$\mu$      & \multicolumn{1}{l|}{\cellcolor[HTML]{96FFFB}Natural death rate of uninfected hepatocytes}            &    0.01    \\ \hline
		\rowcolor[HTML]{96FFFB} 
		$\delta$   & \multicolumn{1}{l|}{\cellcolor[HTML]{96FFFB}Natural death rate of infected hepatocytes}              &   0.053     \\ \hline
		\rowcolor[HTML]{96FFFB} 
		$a$        & \multicolumn{1}{l|}{\cellcolor[HTML]{96FFFB}Production rate of capsids from infected hepatocytes}    &    150    \\ \hline
		\rowcolor[HTML]{96FFFB} 
		$\gamma$   & \multicolumn{1}{l|}{\cellcolor[HTML]{96FFFB}Recycling rate of capsids}                               &   0.6931     \\ \hline
		\rowcolor[HTML]{96FFFB} 
		$\alpha$   & \multicolumn{1}{l|}{\cellcolor[HTML]{96FFFB}Volume fraction of capsids in favor of virus production} &    0.80    \\ \hline
		\rowcolor[HTML]{96FFFB} 
		$\beta$    & \multicolumn{1}{l|}{\cellcolor[HTML]{96FFFB}Virus production rate from infected hepatocytes}         &   0.87     \\ \hline
		\rowcolor[HTML]{96FFFB} 
		$c$        & \multicolumn{1}{l|}{\cellcolor[HTML]{96FFFB}Natural decay rate of viruses}                           &    3.8    \\ \hline
		\rowcolor[HTML]{96FFFB} 
		$d_v$        & \multicolumn{1}{l|}{\cellcolor[HTML]{96FFFB}Diffusion coefficient}                           &   0.08     \\ \hline
	\end{tabular}
\end{table}
\section{Initial and  boundary conditions}
 In order to solve the proposed model \eqref{Eqn: PDE-simple form rho-1 to rho-5},  biologically relevant initial and boundary conditions are considered. For the sake of convenience, the total length of the liver is denoted by  symbol $L$.

\subsection{Initial conditions:} 
The distributions of initial concentration of $`T,~I,~D,~V$   are as follows: 
	\begin{equation} \label{eq:initial condition}
	\left.
	\begin{split}
		&T_1(x,0)	= T_0\left\{1-\exp\left(-\frac{x^2}{\epsilon}\right)\right\},~ 0\leq x\leq L,\\
		&I_1(x,0)	= I_0\exp\left(-\frac{x^2}{\epsilon}\right),~0\leq x\leq L,\\
		&D_1(x,0)=D_0\exp\left(-\frac{x^2}{\epsilon}\right),~0\leq x\leq L,\\
		&V_1(x,0)=V_0\exp\left(-\frac{x^2}{\epsilon}\right),~0\leq x\leq L,~\epsilon=0.02,\\
	\end{split}
	\hspace{1cm}
	\right\} 
\end{equation}
where $T_0,~I_0,~D_0,$ and $V_0$ represent maximum values of $T_1,~ I_1,~D_1$ and $V_1$, respectively.
\subsection{Boundary conditions} Throughout the infection, it is assumed that there is  no flux across the liver boundary. The liver is considered as an impervious fortress, preventing any external viruses from entering or leaving. The boundary conditions are given as:
\begin{equation}\label{eq:boundary_conditions_left}
\frac{\partial V_1}{\partial x}\Big|_{x=0}=0,~~\frac{\partial V_1}{\partial x}\Big|_{x=L}=0,~t\geq 0,
\end{equation}
where $\dfrac{\partial}{\partial x}$ denotes the outward normal derivative at the boundary of the domain  $x=0$ and $x=L$.
\section{Elasticities of basic reproduction number with respect to  parameters}
The basic reproduction number of the system of equations \eqref{Eqn: PDE-simple form rho-1 to rho-5}  is given by
$R_0=\dfrac{\rho_2\rho_4}{\rho_1\rho_3\rho_5}=\dfrac{ak\lambda\alpha\beta}{R_s\delta\delta_v\mu}$.
 The equilibrium point are:
$$(1,0,0,0), ~\text{and},~
\left(\frac{\rho _1 \rho _3 \rho _5}{\rho _2 \rho _4},~ \frac{\rho _2 \rho _4-\rho _1 \rho _3 \rho _5}{\rho _1 \rho _2 \rho _4},~ \frac{\rho _2 \rho _4-\rho _1 \rho _3 \rho _5}{\rho _1 \rho _3 \rho _4},~ \frac{\rho _2 \rho _4-\rho _1 \rho _3 \rho _5}{\rho _1 \rho _3 \rho _5}\right)=\left(T_1^*,I_1^*,D_1^*,V_1^*\right)$$

\noindent	
The static quantity $R_0$ depends on nearly all the parameters of the model \eqref{Eqn:PDE}. In the prediction of evolution of HBV, the threshold number $R_0$ plays important roles. The sensitivity analysis of $R_0$ is performed here in order to determine how $R_0$ responds to the  changes in parameters.  The elasticity of a quantity $\mathcal{Q}$ with respect to the parameter $p$ is denoted by $\mathcal{E}_p^\mathcal{Q}$ \cite{2015_martcheva_introduction} and  defined as
\begin{align*}
	\mathcal{E}_p^\mathcal{Q}=\frac{p}{\mathcal{Q}} \frac{\partial \mathcal{Q}}{\partial p}=\frac{\partial~ \ln \mathcal{Q}}{\partial ~\ln p}.
\end{align*} 
\noindent
Elasticity of $\mathcal{Q}$ is positive if it increases  while the value of $p$ increases and vice-versa. Negative value of $\mathcal{E}_p^\mathcal{Q}$ means that the quantity $\mathbb{Q}$ moves in the opposite direction of $p$.
\begin{align}
	&\mbox{Elasticity of}~R_0~\mbox{w.r.t}~\alpha~\left(\mathcal{E}_\alpha^{{R}_0}\right)=\frac{\alpha}{R_0} \frac{\partial R_0}{\partial \alpha}=\frac{\delta -\gamma }{R_s },\label{elastricity_alpha}\\
	&\mbox{Elasticity of}~R_0~\mbox{w.r.t}~\beta~\left(\mathcal{E}_\beta^{{R}_0}\right)=\frac{\beta}{R_0} \frac{\partial R_0}{\partial \beta}=\frac{(\alpha -1) \gamma +\delta }{R_s},\label{elastricity_gamma}\\	
	&\mbox{Elasticity of}~R_0~\mbox{w.r.t}~\gamma~\left(\mathcal{E}_\gamma^{{R}_0}\right)=\frac{\gamma}{R_0} \frac{\partial R_0}{\partial \gamma}=\frac{\gamma(1-\alpha)}{R_s}. \label{elastricity_beta}
\end{align}
From the expressions given in the equations \eqref{elastricity_alpha},\eqref{elastricity_gamma}, and \eqref{elastricity_beta}, the followings are observed: 
\begin{enumerate}
	\item $ \displaystyle \mathcal{E}_\alpha^{{R}_0}>0$ if $\delta>\gamma$ and $\mathcal{E}_\alpha^{{R}_0}<0$ if $\delta<\gamma$. 
	So, the  nature of functions (positive or negative) of volume fraction of capsids  depends on the death rate of infected hepatocytes and recycling rate of capsids. 
	\item $\displaystyle \mathcal{E}_\beta^{{R}_0}>0$ if $\delta>(1-\alpha)\gamma$ and $\mathcal{E}_\beta^{{R}_0}<0$ if $\delta<(1-\alpha)\gamma$. As in the case of $\alpha$, similar kind of conclusion can be drawn on $\mathcal{E}_\beta^{{R}_0}$. 
	\item $\displaystyle \mathcal{E}_\gamma^{{R}_0}>0$ since $(1-\alpha)>0$ always, which implies that recycling of capsid acts as a positive feedback loop in infection.
\end{enumerate}
\noindent
Using the values of parameters shown in the Table \ref{plot_bar: elastricity}, one can find one possible relationship between $\alpha$, $\beta$, $\gamma$ and $R_0$: 	
\begin{align*}
	\mathcal{E}_\alpha^{{R}_0}\approx-1.05,~~ \mathcal{E}_\beta^{{R}_0}\approx-0.14,~~ \mbox{and} ~~ ~\mathcal{E}_\gamma^{{R}_0}\approx 0.23.
\end{align*}	
\noindent
These elasticity values mean that  1\% increase in $\alpha$, $\beta$ and $\gamma$ produce 1.05\%, 0.14\%   decrease and 0.23\% increase
in $R_0$ (refer Figure \ref{plot_bar: elastricity}). 
\begin{figure}[h!]
	\begin{center}
		\includegraphics[width=12cm,height=8cm]{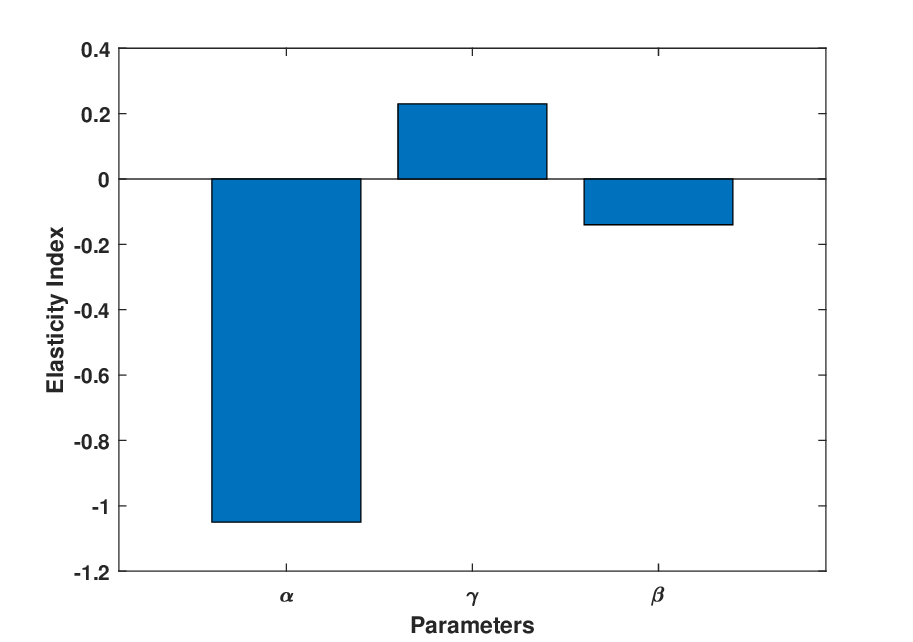};
		\caption{ Elasticities of basic reproduction number with respect to parameters $\alpha$, $\gamma$ and $\beta$.}
		\label{plot_bar: elastricity}
	\end{center}
\end{figure}
\section{Existence of Traveling wave solution}
In order to establish the existence of traveling waves for the system \eqref{Eqn: PDE-simple form rho-1 to rho-5}, we assume that it possesses a solution in the following form $T_1(x,t)=u_1(x+ct),~I_1(x,t)=u_2(x+ct),~D_1(x,t)=u_3(x+ct)$ and $V_1(x,t)=u_4(x+ct)$, where  $u_1,~u_2,~u_3,~u_4$ are the functions of traveling front wave  variable $s=x+ct$, where  $c>0$ denotes the wave speed parameter.  Putting the traveling wave solution in the system \eqref{Eqn: PDE-simple form rho-1 to rho-5}, we obtain
\begin{equation} \label{Eqn: Second order ODE}
	\left.
	\begin{split}
		&cu_1'	= 1-u_1-u_1u_4,\\
		&cu_2'	=u_1u_4-\rho_1 u_2,\\
		&cu_3' =\rho_2 u_2-\rho_3 u_3,\\
		&cu_4'=\mathcal{D}_v u_4''+\rho_4 u_3-\rho_5 u_4.	
	\end{split}
\right\}
\end{equation}
The notation primes $('~\mbox{and}~'')$ signify differentiation with respect to the wave variable $s$. In the context of ecology, it is required that the traveling waves $u_1,~u_2,~u_3$ and $u_4$ should be non-negative and satisfy the following boundary conditions:
\begin{equation} \label{boundary condition}
	\left.
	\begin{split}
		&u_1(-\infty)= 1,~~~~~u_1(\infty)= \frac{\rho _1 \rho _3 \rho _5}{\rho _2 \rho _4}, \\
		&u_2(-\infty)= 0,~~~~~u_2(\infty)= \frac{\rho _2 \rho _4-\rho _1 \rho _3 \rho _5}{\rho _1 \rho _2 \rho _4}, \\	
		&u_3(-\infty)= 0,~~~~~u_3(\infty)= \frac{\rho _2 \rho _4-\rho _1 \rho _3 \rho _5}{\rho _1 \rho _3 \rho _4}, \\
		&u_4(-\infty)= 0,~~~~~u_4(\infty)= \frac{\rho _2 \rho _4-\rho _1 \rho _3 \rho _5}{\rho _1 \rho _3 \rho _5}. \\	
	\end{split}
\right\}
\end{equation}
  Let us denotes $u_5=u_4'$. Subsequently, we derive the following traveling wave equations:
\begin{equation} \label{Eqn: ODE: first order u_1 to u_5}
	\left.
	\begin{split}
		&cu_1'	= 1-u_1-u_1u_4,\\
		&cu_2'	=u_1u_4-\rho_1 u_2,\\
		&cu_3'	=\rho_2u_2-\rho_3 u_3,\\
		&u_4'=u_5,\\
		&\mathcal{D}_vu_5'=c u_5-\rho_4 u_3+\rho_5 u_4.\\ 		 			
	\end{split}
\right\}
\end{equation}
For $R_0>1$, the system of equations \eqref{Eqn: ODE: first order u_1 to u_5} has two steady-states 
$$E_1^*=(1,0,0,0,0), ~\text{and},~
E_2^*=\left(\frac{\rho _1 \rho _3 \rho _5}{\rho _2 \rho _4},~ \frac{\rho _2 \rho _4-\rho _1 \rho _3 \rho _5}{\rho _1 \rho _2 \rho _4},~ \frac{\rho _2 \rho _4-\rho _1 \rho _3 \rho _5}{\rho _1 \rho _3 \rho _4},~ \frac{\rho _2 \rho _4-\rho _1 \rho _3 \rho _5}{\rho _1 \rho _3 \rho _5},0\right)$$
The Jacobian matrix at $E_1^*$ is given by\\
\begin{equation}\label{Matrix: jacobian at E_1^*}
J_{E_1^*}=\left(
\begin{array}{ccccc}
	\cellcolor{blue!20}-\dfrac{1}{c} & 0 & 0 & -\dfrac{1}{c} & 0 \\
	0 &\cellcolor{red!30} -\dfrac{\delta }{c \mu } &\cellcolor{red!30} 0 &\cellcolor{red!30} \dfrac{1}{c} & \cellcolor{red!30}0 \\
	0 &\cellcolor{red!30} \dfrac{a}{c \mu } &\cellcolor{red!30} -\dfrac{R_s}{c \mu } &\cellcolor{red!30} 0 &\cellcolor{red!30} 0 \\
	0 &\cellcolor{red!30} 0\cellcolor{red!30} &\cellcolor{red!30} 0\cellcolor{red!30} &\cellcolor{red!30} 0 &\cellcolor{red!30} 1 \\
	0 &\cellcolor{red!30} 0 &\cellcolor{red!30} -\dfrac{k \alpha  \beta  \lambda }{\mu  D_v} &\cellcolor{red!30} \dfrac{\delta _v}{\mu  D_v} &\cellcolor{red!30} \dfrac{c}{D_v} \\
\end{array}
\right)
\end{equation}
It is clear that $\left(-\dfrac{1}{c}\right)$ is an eigenvalue of the Jacobian  matrix \eqref{Matrix: jacobian at E_1^*}. The other four eigenvalues are the eigenvalues of the sub-matrix \\

\begin{equation}\label{Submatrix 4*4: jacobian at E_1^*}
\left(
\begin{array}{cccc}
	-\dfrac{\delta}{c\mu} & 0 & \dfrac{1}{c}& 0 \\
	\dfrac{a}{c\mu} & -\dfrac{R_s}{c\mu} & 0 & 0 \\
	0 & 0 & 0 & 1 \\
	0 & -\dfrac{ k \alpha  \beta  \lambda}{\mu  D_v} & \dfrac{ \delta _v}{\mu  D_v} & \dfrac{c}{D_v} \\
\end{array}
\right)
\end{equation}
The main aim of this section is to show the existence of traveling wave solution of the system \eqref{Eqn: PDE-simple form rho-1 to rho-5}. 
The original system \eqref{Eqn: PDE-simple form rho-1 to rho-5} admits a traveling wave solution when a heteroclinic orbit connecting the two critical points exist for  the associated ordinary differential equation (ODE) system \eqref{Eqn: ODE: first order u_1 to u_5}. In general, handling matrices (find eigenvalues, eigenvectors, etc.) with dimensions greater than or equal to three is challenging, unless the matrix is  particular. 
To the best of our knowledge, there doesn't exist any hard and fast rule for calculating the eigenvalues of a $4\times 4$ matrix. In our case, the non-zero elements of the  matrix $A$ are function model parameters, and  the matrix is not a special type matrix, hence, the classical or traditional standard approaches for determining the signs of eigenvalue  are difficult to apply. On the other hand, the celebrated $Ger\hat{s}gorin$ theorem provides potential insights to determine  the position  of the eigenvalues of a matrix.  Based on some restrictions on the value of parameters, the existence of  traveling wave is established through the  employment of  well-known $Ger\hat{s}gorin$ theorem in the subsequent section.

\begin{theorem}
	The system \eqref{Eqn: ODE: first order u_1 to u_5} has a traveling wave solution connecting to $E_1^*$ and $E_2^*$ when  the following conditions  hold:
	\begin{enumerate}[(i)]
		\item $\delta>(1+c)\mu$.\label{condition-1}
		\item $R_s-a>c\mu$.\label{condition-2}
		\item $c>D_v\left(1+\dfrac{\delta_v}{\mu D_v}+\dfrac{k\alpha\beta \lambda}{\mu D_v}\right)=c^*.$\label{condition-3}
	\end{enumerate}
\end{theorem}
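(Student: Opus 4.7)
The plan is to reduce the existence of a traveling wave for the PDE system \eqref{Eqn: PDE-simple form rho-1 to rho-5} to the existence of a heteroclinic orbit of the five-dimensional first-order ODE \eqref{Eqn: ODE: first order u_1 to u_5} connecting the infection-free equilibrium $E_1^*$ with the endemic equilibrium $E_2^*$, and then to secure that heteroclinic orbit by using Theorem \ref{2012_Horn_Johnson} to count, rather than compute, the stable and unstable eigen-directions at each equilibrium. The boundary conditions \eqref{boundary condition} automatically identify the two endpoints of such an orbit as $E_1^*$ and $E_2^*$, so the task reduces to an invariant-manifold dimension count plus a connection argument.

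The first concrete step is the Gershgorin analysis at $E_1^*$. The first row of $J_{E_1^*}$ in \eqref{Matrix: jacobian at E_1^*} contributes the trivially stable eigenvalue $-1/c$, so attention shifts to the $4\times 4$ submatrix \eqref{Submatrix 4*4: jacobian at E_1^*}. Its Gershgorin discs $D_1,D_2,D_3,D_4$ are centered at $-\delta/(c\mu)$, $-R_s/(c\mu)$, $0$, $c/\mathcal{D}_v$ with radii $1/c$, $a/(c\mu)$, $1$, $(\delta_v + k\alpha\beta\lambda)/(\mu \mathcal{D}_v)$ respectively. A direct rearrangement shows that hypothesis \emph{(i)} is equivalent to $D_1 \subset \{\operatorname{Re}(z) < -1\}$, hypothesis \emph{(ii)} to $D_2 \subset \{\operatorname{Re}(z) < -1\}$, and hypothesis \emph{(iii)} to $D_4 \subset \{\operatorname{Re}(z) > 1\}$. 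Hence the three groups $D_1 \cup D_2$, $D_3$, and $D_4$ are pairwise disjoint, and the counting clause of Theorem \ref{2012_Horn_Johnson} yields exactly two eigenvalues in $D_1 \cup D_2$ (both with strictly negative real part), exactly one in $D_3$, and exactly one in $D_4$ (with strictly positive real part). To pin down the sign of the $D_3$-eigenvalue I would compute the determinant of the $4\times 4$ submatrix directly --- the bottom $2\times 2$ block $\left(\begin{smallmatrix} 0 & 1 \\ \delta_v/(\mu\mathcal{D}_v) & c/\mathcal{D}_v \end{smallmatrix}\right)$ already has negative determinant $-\delta_v/(\mu\mathcal{D}_v)$ --- and conclude from the product of all four eigenvalues that $D_3$ hosts a positive real eigenvalue, giving $\dim W^{u}(E_1^*) = 2$ and $\dim W^{s}(E_1^*) = 3$.

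I would next repeat the same bookkeeping at $E_2^*$, whose Jacobian acquires extra couplings because $u_1^{\ast} u_4^{\ast} \neq 0$; the goal there is to show that the same three hypotheses force $\dim W^{s}(E_2^*) \geq 4$. With the two counts in hand, the standard transversality inequality $\dim W^{u}(E_1^*) + \dim W^{s}(E_2^*) \geq 2 + 4 > 5$ forces the forward push of $W^{u}(E_1^*)$ to meet $W^{s}(E_2^*)$ inside the ambient phase space, producing the heteroclinic orbit; non-negativity is then ensured by invariance of the positive orthant under the ODE flow, and the boundary conditions \eqref{boundary condition} follow automatically. The two principal obstacles I anticipate are: first, Gershgorin alone cannot fix the sign of any eigenvalue trapped in the origin-centered disc $D_3$, so an auxiliary determinant or Routh--Hurwitz computation is unavoidable both at $E_1^*$ and (more painfully) at $E_2^*$; and second, upgrading the local dimensional count to an honest global connecting orbit requires either a Wa\.{z}ewski-type topological shooting argument or the construction of a bounded positively invariant box preventing the unstable trajectory from escaping to infinity before it is captured by $E_2^*$.
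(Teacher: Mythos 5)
Your first phase is, in fact, the paper's entire proof: the authors apply the Ger\^sgorin theorem to the same $4\times 4$ submatrix of the Jacobian at $E_1^*$, read conditions (i)--(iii) exactly as you do (they place $G_1$ and $G_2$ strictly in $\{x<-1\}$, $G_4$ strictly in $\{x>1\}$, with $G_3$ the unit disc at the origin), and invoke the disjointness/counting clause of Theorem \ref{2012_Horn_Johnson} to conclude that the two eigenvalues in $G_1\cup G_2$ have negative real parts, the $G_4$ eigenvalue is a positive real number, and the $G_3$ eigenvalue is real and nonzero. There the paper stops: it leaves the sign of the $G_3$ eigenvalue undetermined (``either positive or negative''), it never linearizes at $E_2^*$, and it gives no construction of the connecting orbit --- existence of the traveling wave is asserted directly from the eigenvalue configuration at $E_1^*$. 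So the core of your proposal coincides with the published argument, and everything in your second phase goes beyond what the paper actually does.

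Two remarks on those additions. First, your determinant refinement reaches the right conclusion but not for the reason you hint at: the sign of the full $4\times 4$ determinant is not controlled by the bottom $2\times 2$ block; a direct computation (in the scaled variables) gives
\begin{equation*}
\det = \frac{\rho_2\rho_4-\rho_1\rho_3\rho_5}{c^2\mathcal{D}_v}=\frac{\rho_1\rho_3\rho_5}{c^2\mathcal{D}_v}\left(R_0-1\right),
\end{equation*}
so the $G_3$ eigenvalue is positive precisely because $R_0>1$ --- an assumption the theorem uses only implicitly, through the existence of $E_2^*$, and which you should state explicitly. This step is a genuine sharpening of the paper, which leaves that sign open. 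Second, the step in which $\dim W^{u}(E_1^*)+\dim W^{s}(E_2^*)>5$ ``forces'' the two manifolds to intersect is not valid as stated: dimension counts yield a transversality heuristic, not existence of a heteroclinic orbit, and you have not carried out the spectral analysis at $E_2^*$ that your count presupposes. As you yourself note, closing this would require a Wa\.{z}ewski-type shooting argument or a bounded positively invariant region capturing the unstable trajectory of $E_1^*$. None of that machinery appears in the paper either; so your proposal is best described as the paper's proof, correctly reproduced, plus an honest but still incomplete attempt to bridge the gap between the local eigenvalue localization at $E_1^*$ and the claimed heteroclinic connection in \eqref{Eqn: ODE: first order u_1 to u_5}.
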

\begin{proof}
	In order to prove this theorem, we use the Theorem \ref{2012_Horn_Johnson} which is stated in the Section \ref{Preliminaries}. The deleted absolute row sums of $A$  and the centers of the $Ger\hat{s}gorin$ discs are given by: 
	\begin{enumerate}
		\item For first row: $R_1'(A)=\dfrac{1}{c}$, center: $\left(-\dfrac{\delta}{c\mu},0\right)$.
		\item For second row: $R_2'(A)=\dfrac{a}{c\mu}$, center: $\left(-\dfrac{R_s}{c\mu},0\right)$
		\item For third row: $R_3'(A)=1$, center: $(0,0)$
		\item For fourth row: $R_4'(A)=\dfrac{k\alpha\beta\lambda}{\mu D_v}+\dfrac{\delta_v}{\mu D_v}$, center: $\left(\dfrac{c}{D_v},0\right)$
	\end{enumerate}
The  $Ger\hat{s}gorin$ discs are given as 
\begin{align}
	G_1&:=\left\{ z \in \mathcal{C}: \left|z+\dfrac{\delta}{c\mu}\right|\leq R_1'(A)\right\},\\ 
	G_2&:=\left\{ z \in \mathcal{C}: \left|z+\dfrac{R_s}{c\mu}\right|\leq R_2'(A)\right\},\\ 
	G_3&:=\left\{ z \in \mathcal{C}: \left|z-0\right|\leq R_3'(A)\right\},\\ 
	G_4&:=\left\{ z \in \mathcal{C}: \left|z-\dfrac{c}{D_v}\right|\leq R_4'(A)\right\}.
\end{align}
	Since, all the  elements of the  matrix $A$ are real, the disjoint $Ger\hat{s}gorin$ discs gives the real eigenvalues.  For some particular values of parameters, the disjoint $Ger\hat{s}gorin$ discs are shown in Figure \ref{Figure: Four distinct Gersgorin}.  For the values of parameters satisfying conditions \eqref{condition-1}, \eqref{condition-2}, \ref{condition-3}, all  fours dices are disjoint. Based on the proposed conditions (\eqref{condition-1}, \eqref{condition-2}, \eqref{condition-3}),  it is seen that
	\begin{enumerate}
		\item The disc $G_3$ has center at $(0,0)$ and radius $1$. 
		\item The value of $c$ (refer condition \ref{condition-3}) is chosen in such a way that the disc $G_4$ fully lie  in the translated  right  half plane defined by  $\displaystyle \mathcal{P}_{G_4}= \left\{(x,y)\in \mathbb{R}^2:x>1\right\}$, \textit{i.e.}, $G_4$ becomes disjoint from $G_3$. 
		\item Similarly, the conditions \eqref{condition-1} and \eqref{condition-2} are proposed in a manner that 
	 $G_1$ and $G_2$ lies strictly in the plane surfaces defined as $P_{G_1G_2}=\left\{(x,y)\in \mathbb{R}^2:x<-1\right\}$. On the other words, $G_1$, $G_2$ are disjoint from $G_3$ and $G_4$. But  discs $G_1$, $G_2$ may intersect each others.
	\end{enumerate}
	  Now, the right half part of $G_3$ disc lies within the  right half-plane whereas the other part locates in the left half-plane. Therefore, the eigenvalue for the disc $G_3$ is either positive  or negative real number. It can't be zero because the determinant of the matrix $A$ is non-zero. Now, we are in a position to determine the nature of the eigenvalues.
	  \begin{enumerate}
	  	\item The eigenvalue $\lambda_{G3}$ for the disc $G_3$ is either positive or negative real number.
	  	\item The eigenvalue $\lambda_{G4}$ for the disc  $G_4$ is positive real number.
	  	\item The eigenvalues $\lambda_{G1},~\lambda_{G2}$ for the discs $G_1$, $G_2$ are either negative real or  two complex roots with negative real parts.  $\lambda_{G1},~\lambda_{G2}$ can not be purely imaginary due the construction of $G_1$, $G_2$.
	  \end{enumerate}
%
%
	Therefore, based on the condition \eqref{condition-1}, \eqref{condition-2}, \eqref{condition-3},  we always have one positive  eigenvalue. There is no possibility of occurrence complex eigenvalues with positive real parts. 
	 So, system \eqref{Eqn: ODE: first order u_1 to u_5}
	has a traveling wave solution connecting to $E_1^*$ and $E_2^*$. Hence, the system of equations \eqref{Eqn: PDE-simple form rho-1 to rho-5} has traveling wave solution. 
	\begin{figure}
		\includegraphics[height=9cm,width=15cm]{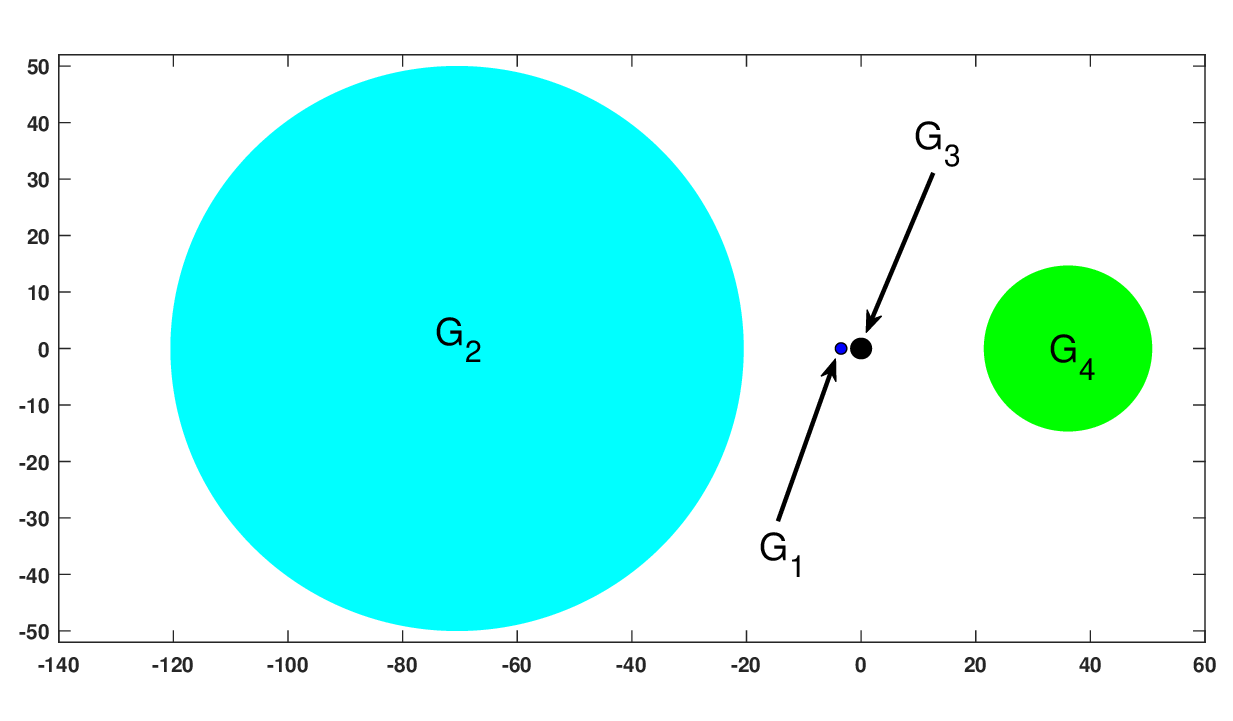}
		\caption{Four distinct $Ger\hat{s}gorin$ dices of the matrix $A$.}
		\label{Figure: Four distinct Gersgorin}
	\end{figure}
\end{proof}
\begin{remark}
	content...
\end{remark}
 \section{Quantitative Analysis}
 The results established in the preceding section demonstrate that system \eqref{Eqn: ODE: first order u_1 to u_5} exhibit a heteroclinic orbit connecting $E_1^*$ and $E_2^*$  when the basic production number $R_0>1$ with the above-mentioned conditions \eqref{condition-1}, \eqref{condition-2}, \eqref{condition-3}. Here,  we investigate how the parameters influence  the infection dynamics. The  parameter values are taken from the Table \ref{Table: Parameter describtion}. For these values of parameters, we calculate $\rho_1=2.81$, $\rho_2=25$, $\rho_3=70.71$, $\rho_4=0.84$, $\rho_5=170$.
 This section also explores the numerical solution of the diffusion model \eqref{Eqn: PDE-simple form rho-1 to rho-5} and provides results for different values of diffusion coefficient.  
 \subsection{Traveling wave solution:}
 For different values of wave speed $c$,  we obtain  non-monotone traveling front profiles for the system \eqref{Eqn: ODE: first order u_1 to u_5} which are shown in Figure \ref{Figure: Traveling wave solution uninfected hepatocytes}. For each values of 
 $c$, a discernible hump is observed in the profiles. The underlying reasons for these occurrences are thoroughly explained in the article of Wang and Wang \cite{2007_wang_wang}. It is further observed that the peak level of the hump  increases while the values of $c$ rises.  
 
 In cases where persistent HBV infection progresses to cirrhosis and primary hepatocellular carcinoma, hepatectomy becomes a necessary intervention for controlling HBV infection in the clinical setting. Generally, determination of appropriate size for hepatectomy poses a challenge in ensuring the effectiveness of the operation. 
 Upon identifying the minimum value of $c$, the size of hepatectomy can be determined accordingly.
 \begin{figure}[h]
 	\includegraphics[height=9cm,width=14cm]{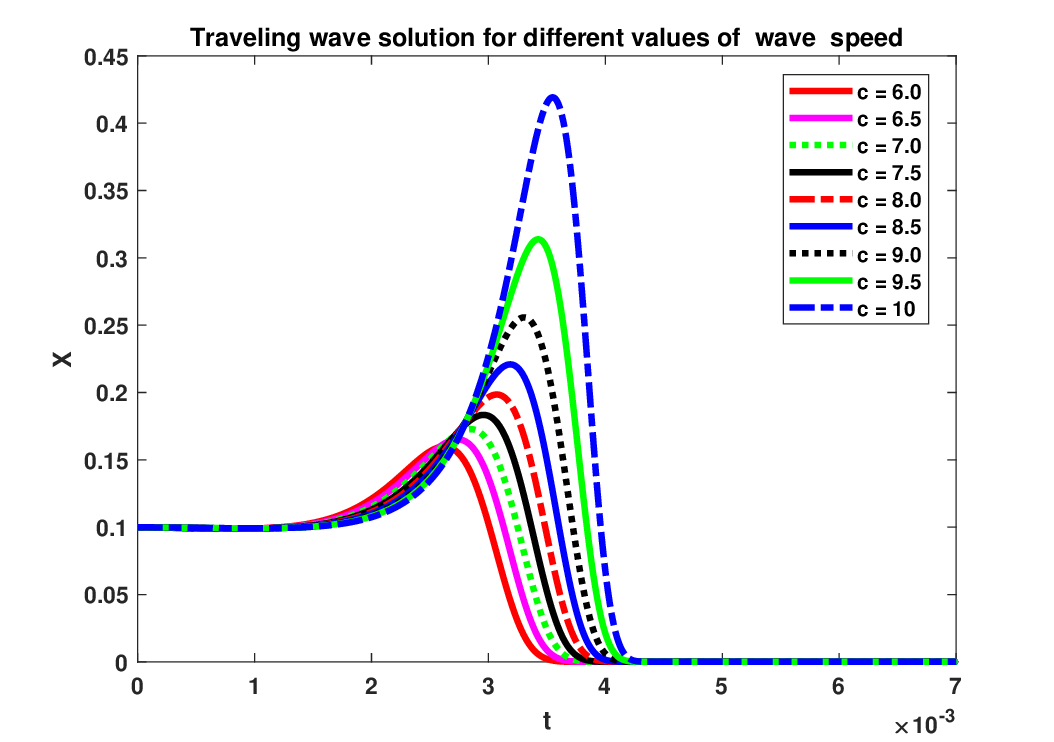}
 	\caption{Traveling wave solution of uninfected hepatocytes for different values of  wave  speed $c$.}
 	\label{Figure: Traveling wave solution uninfected hepatocytes}
 \end{figure}

 \subsection{The effects of diffusion of the viruses:}
 The effects of virus mobility on the HBV infection are investigated through the visualization of the numerical solution. 
 In Figure \ref{Figure: Without diffusion}, the solutions of the proposed model \eqref{Eqn:PDE} are illustrated without diffusion, whereas in Figure \ref{Figure: With diffusion}, the solutions  incorporating  diffusion with diffusion coefficient $d_v=0.2$  are presented.  The values of the parameters are chosen in such a way that $R_0>1$. During the initial stages of infection, the virus concentration is notably elevated at the point of infection. However, the location of the peak level of viruses shifts over time. This shift is particularly significant in non-diffusion systems.  The incorporation of diffusion  enables the virus to spread rapidly throughout the liver. However,  in both cases, the solutions of the system achieve the endemic steady-state $(T_1^*,I_1^*,D_1^*,V_1^*)$ over time.   
 \begin{figure}[h!]
 	\includegraphics[height=10cm,width=14cm]{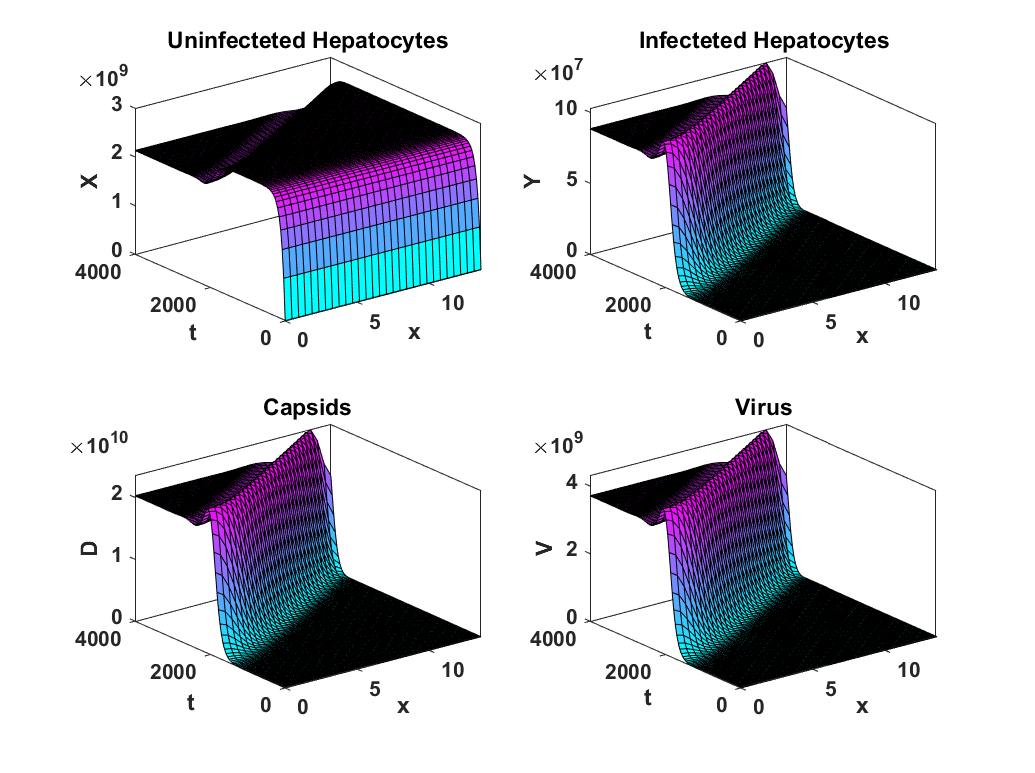}
 	\caption{Solution of system \eqref{Eqn: PDE-simple form rho-1 to rho-5} starting from initial conditions \eqref{eq:initial condition} without diffusion.}
 	\label{Figure: Without diffusion}
 \end{figure}
 \begin{figure}[h!]
	\includegraphics[height=10cm,width=14cm]{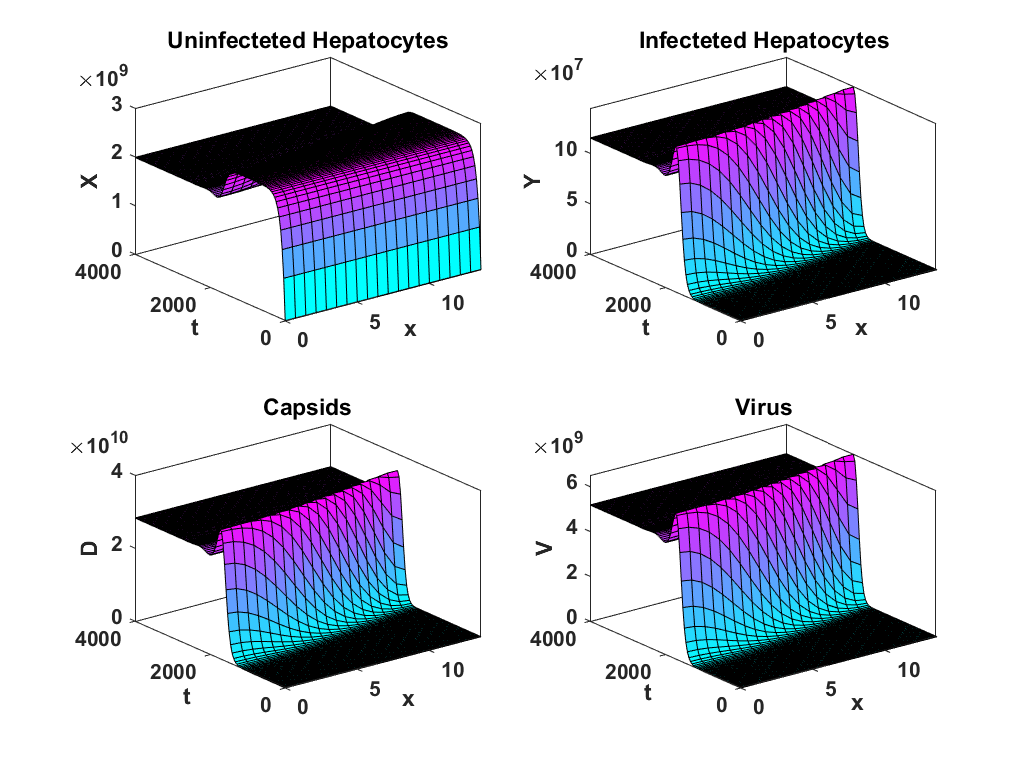}
	\caption{Solution of system \eqref{Eqn: PDE-simple form rho-1 to rho-5} starting from initial conditions \eqref{eq:initial condition} with diffusion. The value of diffusion coefficient is 0.2. }
	\label{Figure: With diffusion}
\end{figure}  
 \section{Conclusions}
 Mathematical models offer valuable insights in understanding the dynamics of infection in vivo  as well as in vitro. In the context of HBV infection, extensive research has been conducted from different aspects to explore the complex interplay of factors which influence virus dynamics within the host. Many factors (cell-to-cell infection, cytolytic and non-cytolytic cure of infected cells, the roles immune cells, etc) can shape the dynamics of HBV infection from various angles. Spatial movement of the virus (diffusion) is one of the key players among them. In this study, including the diffusivity of viruses and recycling of capsids, we extend  non-spatial HBV dynamics model \eqref{ODE_model}, and develop a simple reaction-diffusion model \eqref{Eqn:PDE} for a more realistic representation of the virus dynamics. In order to demonstrate the existence of the traveling wave, we propose an in novo way, probably for the first time, using the celebrated $Ger\hat{s}gorin$ theorem \cite{2012_horn_matrix}. This is the novelty of this work.  We analyze the sensitivity of the basic reproduction number concerning various model parameters, showing  their respective elasticities. Furthermore, our study examines how the spatial diffusivity of viruses affects infection dynamics, and it is observed that the diffusion  facilitates the rapid spread of viruses across the liver.
 
 \subsection*{Acknowledgements}
 The first author also thanks the research facilities received from the Department of Mathematics, Indian
 Institute of Technology Guwahati, India. 
 

\begin{thebibliography}{10}
	
	
	\bibitem{1996_Nowak}
	M. A.  Nowak, S. Bonhoeffer, A. Hill, R. Boehme, H. C.
	Thomas, H. McDade;
	\newblock Viral dynamics in hepatitis B virus infection,
	\newblock {\em Proc. Natl. Acad. Sci. U.S.A.},
	\textbf{93} (1996), 4398--4402.
	
	\bibitem{2006_Murray}
	J. M. Murray, R. H. Purcell, S. F. Wieland;
	\newblock The half-life of hepatitis B virions,
	\newblock {\em Hepatology}, \textbf{44} (2006), 1117--1121.
	
	\bibitem{2007_Ciupe}
	S. M. Ciupe, R. M. Ribeiro, P. W. Nelson, A. S. Perelson;
	\newblock Modeling the mechanisms of acute hepatitis B virus infection,
	\newblock {\em J. Theor. Biol.}, \textbf{247} (2007), 23--35.
	
	\bibitem{2008_Min}
	L. Min, Y. Su,  Y. Kuang;
	\newblock Mathematical analysis of a basic virus infection model with
	application to HBV infection,
	\newblock {\em Rocky Mountain J. Math.}, 38 (2008), 1573--1585.
	
	\bibitem{2015_Manna}
	K. Manna and S. P. Chakrabarty;
	\newblock Chronic hepatitis B infection and HBV DNA-containing capsids:
	Modeling and analysis,
	\newblock {\em Commun Nonlinear Sci Numer Simul},
	\textbf{22} (2015), 383--395.
	
	\bibitem{2018_Danane_mathematical}
	J. Danane, K. Allali;
	\newblock Mathematical analysis and treatment for a delayed hepatitis B viral infection model with the adaptive immune response and DNA-containing capsids,
	\newblock {\em High-throughput}, \textbf{7} (2018), 35.
	
	\bibitem{2018_fatehi_nkcell}
	F. F. Chenar, Y. N. Kyrychko, K. B. Blyuss;
	\newblock Mathematical model of immune response to hepatitis B,
	\newblock {\em J. Theor. Biol.}, \textbf{447} (2018), 98--110.
	
	\bibitem{2021_hews_global}
	S. Hews, S. Eikenberry, J. D. Nagy, T. Phan, Y. Kuang;
	\newblock Global dynamics and implications of an HBV model with proliferating infected hepatocytes,
	\newblock {\em Appl. Sci.}, \textbf{11} (2021), 8176.
	
	\bibitem{2023_Sutradhar_fractional}
	R. Sutradhar, D. C. Dalal;
	\newblock Fractional-order models of hepatitis B virus infection with recycling effects of capsids,
	\newblock {\em Math. Methods Appl. Sci.},
	\textbf{46} (2023), 15599--15625.
	
	\bibitem{2016_jun_nakabayashi}
	J. Nakabayashi;
	\newblock The intracellular dynamics of hepatitis B virus (HBV) replication with reproduced virion “re-cycling”,
	\newblock {\em J. Theor. Biol.}, \textbf{396} (2016), 154--162.
	
	\bibitem{2023_sutradhar_recycling}
	R. Sutradhar, D. C. Dalal;
	\newblock Re-cycling of DNA-containing capsids enhances hepatitis B, \newblock {\em arXiv preprint arXiv: 2309.15665}, (2023).
	
	\bibitem{2004_maini_travelling}
	P. K. Maini, D. L. S. McElwain,  D. Leavesley;
	\newblock Travelling waves in a wound healing assay,
	\newblock {\em Appl. Math. Lett.}, \textbf{17} (2004), 575--580.
	
	\bibitem{2004_matzavinos_travelling}
	A. Matzavinos, M. A. J. Chaplain;
	\newblock Travelling-wave analysis of a model of the immune response to cancer,
	\newblock {\em C. R. Biol.}, \textbf{327} (2004), 995--1008.
	
	\bibitem{2010_gan_travelling}
	Q. Gan, R. Xu, P. Yang, Z. Wu,
	\newblock Travelling waves of a hepatitis B virus infection model with spatial
	diffusion and time delay,
	\newblock {\em IMA J Appl Math}, \textbf{75} (2010), 392--417.
	
	\bibitem{2016_duan_dynamics}
	X. Duan, S. Yuan, K. Wang;
	\newblock Dynamics of a diffusive age-structured HBV model with saturating
	incidence,
	\newblock {\em Math Biosci Eng}, \textbf{13} (2016), 935--968.
	
	\bibitem{2021_issa_diffusion}
	S. Issa, B. M. Tamko, B. Dabol{\'e}, C. B. Tabi, H. P. F. Ekobena;
	\newblock Diffusion effects in nonlinear dynamics of hepatitis B virus,
	\newblock {\em Phys. Scr.}, \textbf{96} (2021), 105217.
	
	\bibitem{2012_horn_matrix}
	R. A. Horn,  C. R. Johnson;
	\newblock { Matrix analysis},
	\newblock Cambridge university press, (2012).
	
	\bibitem{2015_martcheva_introduction}
	M. Martcheva;
	\newblock { An Introduction to Mathematical Epidemiology}, volume~61,
	\newblock Springer, (2015).
	
	\bibitem{2007_wang_wang}
	K. Wang and W. Wang;
	\newblock Propagation of HBV with spatial dependence,
	\newblock {\em Math. Biosci.}, \textbf{210} (2007), 78--95.
	
\end{thebibliography}

\end{document}